\newtheorem{theorem}{Theorem}
\newtheorem{lemma}[theorem]{Lemma}
\newtheorem{example}[theorem]{Example}
\newtheorem{corollary}[theorem]{Corollary}
\title{Extensions of posets with an antitone involution to residuated structures}
\author{Ivan~Chajda, Miroslav~Kola\v r\'ik and Helmut~L\"anger}
\date{}
\begin{document}
\footnotetext[1]{Support of the research of the first and the third author by the Austrian Science Fund (FWF), project I~4579-N, and the Czech Science Foundation (GA\v CR), project 20-09869L, entitled ``The many facets of orthomodularity'', as well as by \"OAD, project CZ~02/2019, entitled ``Function algebras and ordered structures related to logic and data fusion'', and, concerning the first author, by IGA, project P\v rF~2020~014, is gratefully acknowledged.}
\maketitle
\begin{abstract}
We prove that every not necessarily bounded poset $\mathbf P=(P,\leq,{}')$ with an antitone involution can be extended to a residuated poset $\mathbb E(\mathbf P)=(E(P),\leq,\odot,\rightarrow,1)$ where $x'=x\rightarrow0$ for all $x\in P$. If $\mathbf P$ is a lattice with an antitone involution then $\mathbb E(\mathbf P)$ is a lattice, too. We show that a poset can be extended to a residuated poset by means of a finite chain and that a Boolean algebra $(B,\vee,\wedge,{}',p,q)$ can be extended to a residuated lattice $(Q,\vee,\wedge,\odot,\rightarrow,1)$ by means of a finite chain in such a way that $x\odot y=x\wedge y$ and $x\rightarrow y=x'\vee y$ for all $x,y\in B$.
\end{abstract}

{\bf AMS Subject Classification:} 03B52, 06A11, 06B05, 03B47

{\bf Keywords:} Poset, bounded poset, residuated poset, residuated lattice, antitone involution, extension

Residuated posets in general and residuated lattices in particular form an algebraic axiomatization of certain substructural logics (see e.g.\ \cite{CF}, \cite C, \cite J and \cite{JT} and references therein), especially of fuzzy logic, see \cite B for details. Residuated lattices were studied for a long time starting with the pioneering paper by Ward and Dilworth \cite{WD}, see also \cite{C18} and \cite{CL19b}. Posets and lattices with an antitone involution can serve as a suitable model of such a logic because this involution can be considered as a negation and hence these logics satisfy the double negation law, see \cite{C18}. Let us mention that a kind of residuated posets were studied also in \cite{CL18}. Moreover, residuated structures derived from semirings were treated in \cite{CL19a} and \cite J.

Recall that a {\em poset with an antitone involution} is an ordered triple $(P,\leq,{}')$ such that $(P,\leq)$ is a poset and $'$ is a unary operation on $P$ satisfying
\begin{itemize}
\item if $a\leq b$ then $b'\leq a'$,
\item $a''\approx a$
\end{itemize}
for all $a,b\in P$. Recall further that a {\em residuated poset} is an ordered quintuple $(P,\leq,\odot,\rightarrow,1)$ such that
\begin{itemize}
\item $(P,\leq,1)$ is a poset with a greatest element,
\item $(P,\odot,1)$ is a commutative monoid,
\item $\rightarrow$ is a binary operation on $P$,
\item $a\odot b\leq c$ if and only if $a\leq b\rightarrow c$
\end{itemize}
for all $a,b,c\in P$. The last property is called {\em adjointness}.

Unfortunately, not every lattice $(L,\vee,\wedge,{}')$ with an antitone involution $'$ can be converted into a residuated lattice $(L,\vee,\wedge,\odot,\rightarrow,1)$ satisfying $x'=x\rightarrow0$ for all $x\in L$. For example, consider the non-modular lattice $\mathbf N_5=(N_5,\vee,\wedge)$ whose elements are $0,a,b,c,1$ where $0$ is the least and $1$ the greatest element, $a<b$ and $c$ is incomparable with $a$ and $b$, see Figure~1:
\vspace*{-4mm}
\begin{center}
\setlength{\unitlength}{7mm}
\begin{picture}(4,8)
\put(2,1){\circle*{.3}}
\put(1,3){\circle*{.3}}
\put(3,4){\circle*{.3}}
\put(1,5){\circle*{.3}}
\put(2,7){\circle*{.3}}
\put(2,1){\line(-1,2)1}
\put(2,1){\line(1,3)1}
\put(1,3){\line(0,1)2}
\put(2,7){\line(-1,-2)1}
\put(2,7){\line(1,-3)1}
\put(1.85,.25){$0$}
\put(.3,2.85){$a$}
\put(.3,4.85){$b$}
\put(3.4,3.85){$c$}
\put(1.85,7.4){$1$}
\put(1.2,-.75){{\rm Fig.\ 1}}
\end{picture}
\end{center}
\vspace*{4mm}
It is easy to see that there exists exactly one antitone involution $'$ on $(N_5,\leq)$, namely $0'=1$, $a'=b$, $b'=a$, $c'=c$ and $1'=0$. Suppose, $\mathbf N_5$ together with $'$ could be converted into a residuated poset $(N_5,\leq,\odot,\rightarrow,1)$ satisfying $x'=x\rightarrow0$ for all $x\in N_5$. Since $x\odot y\leq x$ and $x\odot y\leq y$ for all $x,y\in N_5$ (see Theorem~2.17 in \cite B) we have $c\odot a\leq0$ which implies $c\leq a\rightarrow0=a'=b$, a contradiction. Thus $\mathbf N_5$ together with $'$ cannot be converted into a residuated poset $(N_5,\leq,\odot,\rightarrow,1)$ satisfying $x'=x\rightarrow0$ for all $x\in N_5$.

Hence, it is a question whether such a lattice (or poset in general) can be extended to a residuated one by preserving the antitone involution. The aim of our paper is to show how such an extension can be constructed.

At first, we show that the unary operation $x':=x\rightarrow0$ in a residuated lattice is antitone.

\begin{lemma}\label{lem1}
Let $(P,\leq,\odot,\rightarrow,1)$ be a residuated poset and put $x':=x\rightarrow0$ for all $x\in P$ Then $x\leq x''$ for all $x\in P$ and $'$ is antitone.
\end{lemma}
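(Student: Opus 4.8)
The plan is to work purely from the adjointness condition $a\odot b\leq c\iff a\leq b\rightarrow c$, together with the commutativity of $\odot$; no lattice operations are needed, so everything stays inside the poset, and one point to be careful about is that the definition only provides a residuated \emph{poset}, so monotonicity of $\odot$ must itself be derived from adjointness rather than assumed.

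First I would record the auxiliary fact that $\odot$ is monotone in each argument: if $a_1\leq a_2$ then from $a_2\odot b\leq a_2\odot b$ and adjointness we get $a_2\leq b\rightarrow(a_2\odot b)$, hence $a_1\leq b\rightarrow(a_2\odot b)$, and adjointness again yields $a_1\odot b\leq a_2\odot b$; commutativity gives monotonicity in the other coordinate. Next comes the key inequality $x\odot(x\rightarrow0)\leq0$: apply adjointness to the trivially true relation $x\rightarrow0\leq x\rightarrow0$ to obtain $(x\rightarrow0)\odot x\leq0$, then use commutativity. From this, reading adjointness in the direction $x\odot(x\rightarrow0)\leq0\implies x\leq(x\rightarrow0)\rightarrow0$, we get $x\leq(x\rightarrow0)\rightarrow0=x''$, which is the first assertion.

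For antitonicity, suppose $a\leq b$. By monotonicity of $\odot$ in the second argument and the key inequality applied to $b$, we have $(b\rightarrow0)\odot a\leq(b\rightarrow0)\odot b=b\odot(b\rightarrow0)\leq0$. Adjointness then gives $b\rightarrow0\leq a\rightarrow0$, that is, $b'\leq a'$, as required.

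I do not expect a genuine obstacle here; the statement is essentially a standard consequence of adjointness. The only thing demanding a little care is bookkeeping: not invoking meets or joins (we are in a poset), deriving monotonicity of $\odot$ before using it, and keeping straight which direction of the adjointness biconditional is applied at each step.
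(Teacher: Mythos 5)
Your proof is correct, and its first half (the inequality $x\odot(x\rightarrow 0)\leq 0$ obtained from $x\rightarrow 0\leq x\rightarrow 0$ via adjointness and commutativity, then $x\leq x''$ by the other direction of adjointness) is exactly the paper's argument. The only divergence is in the antitonicity step: you first derive monotonicity of $\odot$ from adjointness and then compute $(b\rightarrow 0)\odot a\leq(b\rightarrow 0)\odot b\leq 0$, whereas the paper needs no monotonicity at all --- it simply chains $a\leq b\leq b''=(b\rightarrow 0)\rightarrow 0$ using the already-proved first assertion, applies adjointness to get $a\odot(b\rightarrow 0)\leq 0$, commutes, and applies adjointness once more to conclude $b'\leq a'$. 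Both routes are valid; the paper's is slightly leaner because it reuses $x\leq x''$ and avoids the auxiliary monotonicity lemma, while yours makes explicit a generally useful fact (isotonicity of $\odot$ in each coordinate) that you correctly observe must be derived rather than assumed in the purely order-theoretic setting.
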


\begin{proof}
Let $a,b\in P$. Then the every of the following assertions implies the next one:
\begin{align*}
         a\rightarrow0 & \leq a\rightarrow0, \\
(a\rightarrow0)\odot a & \leq0, \\
 a\odot(a\rightarrow0) & \leq0, \\
                     a & \leq(a\rightarrow0)\rightarrow0, \\
										 a & \leq a''.
\end{align*}
Moreover, every of the following assertions implies the next one:
\begin{align*}
                     a & \leq b, \\
                     a & \leq(b\rightarrow0)\rightarrow0, \\
 a\odot(b\rightarrow0) & \leq0, \\
(b\rightarrow0)\odot a & \leq0, \\
         b\rightarrow0 & \leq a\rightarrow0, \\
                    b' & \leq a'.
\end{align*}
\end{proof}

Our first extension result is as follows.

\begin{theorem}\label{th1}
Let $\mathbf P=(P,\leq,{}')$ be a poset with an antitone involution, assume $0=c_1,\ldots,c_4=1\notin P$, put $E(P):=P\cup\{c_1,\ldots,c_4\}$ and extend $\leq$ and $'$ from $P$ to $E(P)$ by $0<c_2<x<c_3<1$ for all $x\in P$ and $c_i':=c_{5-i}$ for $i=1,\ldots,4$. Define binary operations $\odot$ and $\rightarrow$ on $E(P)$ as follows:
\begin{align*}
& 0\odot x=x\odot0:=0, 1\odot x=x\odot1:=x, \\
& 0\rightarrow x=x\rightarrow1:=1, x\rightarrow0:=x', 1\rightarrow x:=x
\end{align*}
for $x\in E(P)$ and
\[
x\odot y:=\left\{
\begin{array}{ll}
0   & \text{if }x\leq y', \\
c_2 & \text{otherwise} 
\end{array}
\right.
\quad\quad\quad x\rightarrow y:=\left\{
\begin{array}{ll}
1   & \text{if }x\leq y, \\
c_3 & \text{otherwise}
\end{array}
\right.
\]
for $x,y\in E(P)\setminus\{0,1\}$. Then $\mathbb E(\mathbf P):=(E(P),\leq,\odot,\rightarrow,1)$ is a residuated poset with the antitone involution $'$ satisfying $x'=x\rightarrow0$ for all $x\in E(P)$. {\rm(}If $\mathbf P$ is already bounded then the least and greatest element of $\mathbf P$ may be identified with $c_2$ and $c_3$, respectively. If $\mathbf P$ has elements $a,b,c,d$ satisfying $a<b\leq x\leq c<d$ for all $x\in P\setminus\{a,d\}$ then $a,b,c,d$ may be identified with $c_1,\ldots,c_4$, respectively.{\rm)}
\end{theorem}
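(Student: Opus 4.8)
The plan is to verify, one by one, the four defining clauses of a residuated poset together with the two extra demands that $'$ be an antitone involution on $E(P)$ and that $x'=x\rightarrow0$. First one should check that the displayed definitions of $\odot$ and $\rightarrow$ are unambiguous: the only pairs lying in the scope of two clauses are $(0,0),(0,1),(1,0),(1,1)$, and on each the clauses agree (for instance $1\rightarrow0=1'=0$ agrees with the value $0$ obtained from $1\rightarrow x:=x$). In particular $x\rightarrow0=x'$ holds for every $x\in E(P)$ by definition, which settles the last demand. That $(E(P),\leq)$ is a poset with greatest element $1$ is routine; antisymmetry holds because the assignment sending $0,c_2,c_3,1$ to $0,1,3,4$ and all of $P$ to $2$ is an order-preserving map into the chain $\{0,1,2,3,4\}$ whose only non-singleton fibre is $P$, on which $\leq$ restricts to the antisymmetric order of $\mathbf P$. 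That $'$ is an antitone involution on $E(P)$ is a short case check using $c_i''=c_i$ and the fact that $'$ reverses $0<c_2<x<c_3<1$.

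Next I would treat the monoid $(E(P),\odot,1)$. By definition $1$ is a two-sided unit. Commutativity is immediate on pairs involving $0$ or $1$; for $x,y\in E(P)\setminus\{0,1\}$ the only possible values of $x\odot y$ and $y\odot x$ are $0$ and $c_2$, so it suffices to observe that $x\leq y'$ is equivalent to $y\leq x'$, which follows from antitonicity of $'$ together with $a''=a$. Associativity looks like the most laborious step but collapses: if one of $x,y,z$ equals $0$ then both iterated products equal $0$; if one of them equals $1$ it drops out and the two sides plainly agree; and if $x,y,z\in E(P)\setminus\{0,1\}$ then $x\odot y\in\{0,c_2\}$, and since $c_2\leq w$ for every $w\in E(P)\setminus\{0\}$, in particular $c_2\leq z'$ because $z\neq1$, we get $c_2\odot z=0$, hence $(x\odot y)\odot z=0$ in every case; using commutativity, $x\odot(y\odot z)=(y\odot z)\odot x=0$ as well, so the two agree.

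The substantive step is adjointness, namely $a\odot b\leq c$ if and only if $a\leq b\rightarrow c$. The key observation is that for $u,v\in E(P)\setminus\{0,1\}$ one has $u\odot v\in\{0,c_2\}$ and $u\rightarrow v\in\{1,c_3\}$, while $c_2\leq w$ for every $w\neq0$ and $w\leq c_3$ for every $w\neq1$. I would then split on how $a,b,c$ meet $\{0,1\}$, taking the first applicable case. If $a=0$, or $b=0$, or $c=1$, both sides hold trivially (using $a\odot0=0$, $0\rightarrow c=1$, $b\rightarrow1=1$, and that $0$ is least and $1$ greatest). If $b=1$ then $a\odot1=a$ and $1\rightarrow c=c$, so both sides say $a\leq c$. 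If $a=1$ while $b\notin\{0,1\}$ and $c\neq1$, then $1\odot b=b$, so the left side says $b\leq c$ while the right side says $b\rightarrow c=1$; these agree, since for $c=0$ both fail (as $b\neq0$) and for $c\notin\{0,1\}$ both amount to $b\leq c$ by the defining clause for $\rightarrow$. Finally, if $a,b\notin\{0,1\}$ and $c\neq1$: when $c=0$ the left side reads $a\odot b=0$, i.e.\ $a\leq b'=b\rightarrow0$, which is exactly the right side; and when $c\notin\{0,1\}$ both sides hold, the left because $a\odot b\leq c_2\leq c$, the right because $a\leq b\rightarrow c$ for both possible values $1$ and $c_3$ of $b\rightarrow c$ (recall $a\neq1$). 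This exhausts all cases.

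It remains to note the bracketed assertions. If $\mathbf P$ is bounded one takes $c_2,c_3$ to be its least and greatest elements, so that only $c_1,c_4$ are genuinely adjoined; if $\mathbf P$ already carries elements $a<b\leq x\leq c<d$ for all $x\in P\setminus\{a,d\}$, one takes $c_1,\ldots,c_4$ to be $a,b,c,d$. In either case the required identities $c_i'=c_{5-i}$ hold automatically, since an antitone involution carries the $i$-th smallest element to the $i$-th largest, and every formula and every verification above refers only to $\leq$ and $'$, so it carries over verbatim. I expect adjointness to be the only place that demands genuine care; associativity, which at first glance threatens a ninefold computation, becomes immediate once one spots the collapse $c_2\odot z=0$ for $z\neq1$.
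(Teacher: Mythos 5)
Your proof is correct and follows essentially the same route as the paper's: a direct case-by-case verification of commutativity, associativity and adjointness, hinging on the same observations that $c_2$ is the least element of $E(P)\setminus\{0\}$, that $c_3$ is the greatest element of $E(P)\setminus\{1\}$, and that $x\leq y'$ iff $y\leq x'$. Your streamlined associativity argument (every triple product of elements outside $\{0,1\}$ collapses to $0$ because $c_2\odot z=0$) and your explicit well-definedness check are minor presentational improvements over the paper's fourfold case split, not a different method.
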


\begin{proof}
Let $a,b,c\in E(P)$. Since $x''=x$ for all $x\in P$ and $c_i''=c_{5-i}'=c_{5-(5-i)}=c_i$ for all $i=1,\ldots,4$, $(E(P),\leq,{}',0,1)$ is a bounded poset with an antitone  involution. \\
If $\{a,b,c\}\cap\{0,1\}\neq\emptyset$ then, obviously, $(a\odot b)\odot c=a\odot(b\odot c)$.
If $a,b,c\neq0,1$, $a\leq b'$ and $b\leq c'$ then $(a\odot b)\odot c=0\odot c=0=a\odot0=a\odot(b\odot c)$. \\
If $a,b,c\neq0,1$, $a\leq b'$ and $b\not\leq c'$ then $(a\odot b)\odot c=0\odot c=0=a\odot c_2=a\odot(b\odot c)$. \\
If $a,b,c\neq0,1$, $a\not\leq b'$ and $b\leq c'$ then $(a\odot b)\odot c=c_2\odot c=0=a\odot0=a\odot(b\odot c)$. \\
If $a,b,c\neq0,1$, $a\not\leq b'$ and $b\not\leq c'$ then $(a\odot b)\odot c=c_2\odot c=0=a\odot c_2=a\odot(b\odot c)$. \\
Therefore, $\odot$ is associative. Since $a\leq b'$ is equivalent to $b\leq a'$, $\odot$ is commutative. \\
If $a=0$ then $a\odot b=0$ and $a\leq b'$. \\
If $b=0$ then $a\odot b=0$ and $a\leq b'$. \\
If $a=1$ then $a\odot b=0$ and $a\leq b'$ are equivalent to $b=0$. \\
If $b=1$ then $a\odot b=0$ and $a\leq b'$ are equivalent to $a=0$. \\
If $a,b\neq0,1$ then $a\odot b=0$ is equivalent to $a\leq b'$. \\
Hence $a\odot b=0$ if and only if $a\leq b'$. \\
If $a=0$ then $a\rightarrow b=1$ and $a\leq b$. \\
If $b=0$ then $a\rightarrow b=1$ and $a\leq b$ are equivalent to $a=0$. \\
If $a=1$ then $a\rightarrow b=1$ and $a\leq b$ are equivalent to $b=1$. \\
If $b=1$ then $a\rightarrow b=1$ and $a\leq b$. \\
If $a,b\neq0,1$ then $a\rightarrow b=1$ and $a\leq b$ are equivalent. \\
Hence $a\rightarrow b=1$ if and only if $a\leq b$. \\
If $a=0$ then $a\odot b\leq c$ and $a\leq b\rightarrow c$. \\
If $b=0$ then $a\odot b\leq c$ and $a\leq b\rightarrow c$. \\
If $c=0$ then $a\odot b\leq c$ and $a\leq b\rightarrow c$ are equivalent to $a\leq b'$. \\
If $a=1$ then $a\odot b\leq c$ and $a\leq b\rightarrow c$ are equivalent to $b\leq c$. \\
If $b=1$ then $a\odot b\leq c$ and $a\leq b\rightarrow c$ are equivalent to $a\leq c$. \\
If $c=1$ then $a\odot b\leq c$ and $a\leq b\rightarrow c$. \\
If $a,b,c\neq0,1$ then $a\odot b\leq c_2$ and $c_3\leq b\rightarrow c$ and hence $a\odot b\leq c$ and $a\leq b\rightarrow c$. \\
Thus the adjointness property holds.
\end{proof}

As mentioned above, the non-modular lattice $\mathbf N_5$ with an antitone involution cannot be converted into a residuated lattice $(N_5,\vee,\wedge,\odot,\rightarrow,1)$ with an antitone involution $'$ satisfying $x'=x\rightarrow0$ for all $x\in N_5$. Using Theorem~\ref{th1}, we can extend $\mathbf N_5$ as follows.

\begin{example}
Theorem~\ref{th1} applied to $\mathbf N_5$ yields the residuated lattice depicted in Figure~2:
\vspace*{-4mm}
\begin{center}
\setlength{\unitlength}{7mm}
\begin{picture}(4,12)
\put(2,1){\circle*{.3}}
\put(2,3){\circle*{.3}}
\put(1,5){\circle*{.3}}
\put(3,6){\circle*{.3}}
\put(1,7){\circle*{.3}}
\put(2,9){\circle*{.3}}
\put(2,11){\circle*{.3}}
\put(2,1){\line(0,1)2}
\put(2,3){\line(-1,2)1}
\put(2,3){\line(1,3)1}
\put(1,5){\line(0,1)2}
\put(2,9){\line(-1,-2)1}
\put(2,9){\line(1,-3)1}
\put(2,9){\line(0,1)2}
\put(1.875,.25){$0$}
\put(2.4,2.85){$c_2$}
\put(3.4,5.85){$c$}
\put(2.4,8.85){$c_3$}
\put(1.875,11.4){$1$}
\put(.3,4.85){$a$}
\put(.3,6.85){$b$}
\put(1.2,-.75){{\rm Fig.\ 2}}
\end{picture}
\end{center}
\vspace*{4mm}
with operation tables
\[
\begin{array}{l|lllllll}
\odot & 0 & c_2 & a   & b   & c   & c_3 & 1 \\
\hline
0     & 0 & 0   & 0   & 0   & 0   & 0   & 0 \\
c_2   & 0 & 0   & 0   & 0   & 0   & 0   & c_2 \\
a     & 0 & 0   & 0   & 0   & c_2 & c_2 & a \\
b     & 0 & 0   & 0   & c_2 & c_2 & c_2 & b \\
c     & 0 & 0   & c_2 & c_2 & 0   & c_2 & c \\
c_3   & 0 & 0   & c_2 & c_2 & c_2 & c_2 & c_3 \\
1     & 0 & c_2 & a   & b   & c   & c_3 & 1
\end{array}
\quad\quad\quad
\begin{array}{l|lllllll}
\rightarrow & 0   & c_2 & a   & b   & c   & c_3 & 1 \\
\hline
0           & 1   & 1   & 1   & 1   & 1   & 1   & 1 \\
c_2         & c_3 & 1   & 1   & 1   & 1   & 1   & 1 \\
a           & b   & c_3 & 1   & 1   & c_3 & 1   & 1 \\
b           & a   & c_3 & c_3 & 1   & c_3 & 1   & 1 \\
c           & c   & c_3 & c_3 & c_3 & 1   & 1   & 1 \\
c_3         & c_2 & c_3 & c_3 & c_3 & c_3 & 1   & 1 \\
1           & 0   & c_2 & a   & b   & c   & c_3 & 1
\end{array}
\]
Observe there is only one possibility for the antitone involution.
\end{example}

Let us note that if $\mathbf P=(P,\leq,{}')$ is a finite chain containing at least three elements (with unique antitone involution) then, using the construction from Theorem~\ref{th1}, $\mathbf P$ can be converted into a residuated chain $\mathbb E(\mathbf P)=(P,\leq,\odot,\rightarrow,1)$ satisfying $x'=x\rightarrow0$ for all $x\in P$.

\begin{corollary}\label{cor1}
If $(C,\leq)=(\{c_1,\ldots,c_n\},\leq)$ is a finite chain $0=c_1<c_2<c_3<\cdots<c_n=1$ with $n\geq3$ elements,
\begin{align*}
& 0\odot c_i=c_i\odot0:=0, 1\odot c_i=c_i\odot1:=c_i, \\
& 0\rightarrow c_i=c_i\rightarrow1:=1, c_i\rightarrow0:=c_{n+1-i}, 1\rightarrow c_i:=c_i
\end{align*}
for $i=1,\ldots,n$ and
\[
c_i\odot c_j:=\left\{
\begin{array}{ll}
0   & \text{if }i+j\leq n+1, \\
c_2 & \text{otherwise} 
\end{array}
\right.
\quad\quad\quad c_i\rightarrow c_j:=\left\{
\begin{array}{ll}
1       & \text{if }i\leq j, \\
c_{n-1} & \text{otherwise}
\end{array}
\right.
\]
for $i,j=2,\ldots,n-1$ then $(C,\leq,\odot,\rightarrow,1)$ is a residuated lattice where $x':=x\rightarrow0$ is an antitone involution.
\end{corollary}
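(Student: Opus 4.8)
The plan is to obtain Corollary~\ref{cor1} as an instance of Theorem~\ref{th1} rather than to verify it from scratch. Put $\mathbf Q:=(Q,\leq,{}')$ where $Q:=\{c_3,\ldots,c_{n-2}\}$ (so $Q=\emptyset$ when $n\leq4$) and $c_i':=c_{n+1-i}$ for $i=3,\ldots,n-2$. First I would check that $\mathbf Q$ is a poset with an antitone involution: $Q$ inherits the chain order of $C$, and $i\mapsto n+1-i$ maps $\{3,\ldots,n-2\}$ bijectively onto itself, reverses it, and is an involution, so $'$ is a well-defined antitone involution on $Q$.

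Next I would apply Theorem~\ref{th1} to $\mathbf Q$ and rename, in increasing order, the four adjoined elements as $c_1,c_2,c_{n-1},c_n$. Then the extended set is $E(Q)=Q\cup\{c_1,c_2,c_{n-1},c_n\}=\{c_1,\ldots,c_n\}=C$; the order produced by Theorem~\ref{th1}, namely $c_1<c_2<x<c_{n-1}<c_n$ for all $x\in Q$ together with the order of $Q$, is precisely the chain order of $C$; and the involution of $\mathbb E(\mathbf Q)$ is $c_i\mapsto c_{n+1-i}$ throughout $C$. It then remains to match the operations: the boundary clauses of Theorem~\ref{th1} and of the corollary coincide verbatim, and for $i,j\in\{2,\ldots,n-1\}$ the two elementary equivalences
\[
c_i\leq c_j'\iff c_i\leq c_{n+1-j}\iff i+j\leq n+1,\qquad c_i\leq c_j\iff i\leq j
\]
turn the case split ``$x\leq y'$'' in the definition of $\odot$ into ``$i+j\leq n+1$'' and the case split ``$x\leq y$'' in the definition of $\rightarrow$ into ``$i\leq j$'', the ``otherwise'' outputs being the renamed elements $c_2$ and $c_{n-1}$. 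Hence $(C,\leq,\odot,\rightarrow,1)=\mathbb E(\mathbf Q)$ is a residuated poset in which $x'=x\rightarrow0$; since a finite chain is a lattice, it is a residuated lattice, and $x\mapsto x\rightarrow0$ is its (unique) antitone involution. This settles every $n\geq4$.

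The only case not literally covered is $n=3$ (there $|E(Q)|=4\neq n$): then the two tables of the corollary are those of the three-element {\L}ukasiewicz chain, and commutativity and associativity of $\odot$, the unit law for $c_3=1$, and adjointness are checked directly from the $3\times3$ tables, $x\mapsto x\rightarrow0$ being the antitone involution $c_1\mapsto c_3$, $c_2\mapsto c_2$, $c_3\mapsto c_1$. (Alternatively, the corollary can be proved uniformly for all $n\geq3$ by repeating verbatim the case analysis from the proof of Theorem~\ref{th1} with $a,b,c$ ranging over $C$; only the notation changes.) I do not expect a genuine obstacle here: the work is clerical, the one point demanding care being the identification step --- which adjoined element becomes which $c_k$, and the checks that the induced order, the induced involution, and the two case-defining inequalities come out exactly as in the statement --- together with noticing that $n=3$ lies outside the Theorem~\ref{th1} construction and needs a separate (trivial) verification.
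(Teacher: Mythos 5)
Your proposal is correct and takes essentially the same route the paper intends: the corollary is stated there without proof, as an instance of Theorem~\ref{th1} via the identification remark following that theorem, which is exactly your reduction to the inner chain $\{c_3,\ldots,c_{n-2}\}$ with the outer four elements renamed $c_1,c_2,c_{n-1},c_n$. Your separate direct verification for $n=3$ (where the four adjoined elements of Theorem~\ref{th1} cannot all remain distinct) addresses an edge case the paper glosses over, and is handled correctly.
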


\begin{example}\label{ex1}
Theorem~\ref{th1} or Corollary~\ref{cor1} applied to the five-element chain yields the residuated lattice depicted in Figure~3:
\vspace*{-8mm}
\begin{center}
\setlength{\unitlength}{7mm}
\begin{picture}(2,9)
\put(1,0){\circle*{.3}}
\put(1,2){\circle*{.3}}
\put(1,4){\circle*{.3}}
\put(1,6){\circle*{.3}}
\put(1,8){\circle*{.3}}
\put(1,0){\line(0,1)8}
\put(1.4,-.15){$0$}
\put(1.4,1.85){$c_2$}
\put(1.4,3.85){$c_3$}
\put(1.4,5.85){$c_4$}
\put(1.4,7.85){$1$}
\put(.2,-.95){{\rm Fig.\ 3}}
\end{picture}
\end{center}
\vspace*{4mm}
with operation tables
\[
\begin{array}{l|lllll}
\odot & 0 & c_2 & c_3 & c_4 & 1 \\
\hline
0     & 0 & 0   & 0   & 0   & 0 \\
c_2   & 0 & 0   & 0   & 0   & c_2 \\
c_3   & 0 & 0   & 0   & c_2 & c_3 \\
c_4   & 0 & 0   & c_2 & c_2 & c_4 \\
1     & 0 & c_2 & c_3 & c_4 & 1
\end{array}
\quad\quad\quad
\begin{array}{l|lllll}
\rightarrow & 0   & c_2 & c_3 & c_4 & 1 \\
\hline
0           & 1   & 1   & 1   & 1   & 1 \\
c_2         & c_4 & 1   & 1   & 1   & 1 \\
c_3         & c_3 & c_4 & 1   & 1   & 1 \\
c_4         & c_2 & c_4 & c_4 & 1   & 1 \\
1           & 0   & c_2 & c_3 & c_4 & 1
\end{array}
\]
Here $c_3$ of Theorem~\ref{th1} corresponds to $c_4$ of Example~\ref{ex1}.
\end{example}

If the poset $\mathbf P$ in question is a lattice, we can also apply the construction of $'$, $\odot$ and $\rightarrow$ from Theorem~\ref{th1} to obtain a residuated lattice $\mathbb E(\mathbf P)$. Hence, we can state the following.

\begin{corollary}
Let $\mathbf L=(L,\vee,\wedge,{}')$ be a lattice with an antitone involution $'$. Then $\mathbf L$ can be extended to a residuated lattice $\mathbb E(\mathbf L)$ with an antitone involution where the operations $'$, $\odot$ and $\rightarrow$ are constructed as in Theorem~\ref{th1} and $'$ coincides in $L$ with the original one.
\end{corollary}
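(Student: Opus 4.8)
The plan is to apply Theorem~\ref{th1} directly to the poset $(L,\leq,{}')$ underlying $\mathbf L$ and then to check that the resulting order extension is again a lattice. Concretely, I would form $\mathbb E(\mathbf L)=(E(L),\leq,\odot,\rightarrow,1)$ with $E(L)=L\cup\{c_1,\ldots,c_4\}$ exactly as in Theorem~\ref{th1}; that theorem already tells us that $\mathbb E(\mathbf L)$ is a residuated poset carrying an antitone involution $'$ with $x'=x\rightarrow0$, and that $'$ restricted to $L$ coincides with the original involution. Since a residuated poset whose underlying poset happens to be a lattice is, by definition, a residuated lattice, the entire claim reduces to the single assertion that $(E(L),\leq)$ is a lattice.

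For that assertion I would observe that $(E(L),\leq)$ is precisely the ordinal (linear) sum $\{c_1\}\oplus\{c_2\}\oplus L\oplus\{c_3\}\oplus\{c_4\}$: both elements of $\{c_1,c_2\}$ lie below every element of $L$, every element of $L$ lies below both elements of $\{c_3,c_4\}$, and $c_1<c_2$, $c_3<c_4$. A finite ordinal sum of lattices is again a lattice, and I would spell this out in a line for the case at hand. The point is that for $x,y\in L$ the join $x\vee y$ and meet $x\wedge y$ computed in $\mathbf L$ remain the least upper bound and greatest lower bound in $E(L)$: an upper bound of $\{x,y\}$ in $E(L)$ outside $L$ can only be $c_3$ or $c_4=1$, and $x\vee y\in L$ lies below both, while $c_1,c_2$ are below both $x$ and $y$; dually for meets. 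Every pair involving one of the $c_i$ is comparable to enough of $E(L)$ that its join and meet are read off at once (for instance $c_2\vee x=x$, $c_2\wedge x=c_2$, $c_3\vee x=c_3$, $c_3\wedge x=x$ for $x\in L$, and anything involving $c_1$ or $c_4$ is trivial). Hence all binary joins and meets exist and $(E(L),\leq)$ is a lattice.

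Combining the two observations, $\mathbb E(\mathbf L)$ is a residuated lattice equipped with an antitone involution $'$ satisfying $x'=x\rightarrow0$ and extending $\mathbf L$ with $'$ unchanged on $L$, which is exactly the statement. I do not expect a genuine obstacle: the residuation axioms and the involution are already delivered by Theorem~\ref{th1}, and the only thing requiring (routine) care is verifying that inserting the new pair $c_1<c_2$ below $L$ and the new pair $c_3<c_4$ above $L$ neither destroys an existing join or meet of $\mathbf L$ nor leaves any new pair of elements without a join or a meet.
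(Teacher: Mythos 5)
Your proposal is correct and follows exactly the route the paper takes: the paper offers no proof beyond the remark that applying the construction of Theorem~\ref{th1} to a lattice again yields a lattice, and your ordinal-sum argument simply spells out that routine verification. Nothing is missing.
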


Recall from \cite{C16} and \cite K that a lattice $\mathbf L=(L,\vee,\wedge,{}')$ with an antitone involution is called a {\em pseudo-Kleene algebra} if it satisfies the identities
\begin{enumerate}[(1)]
\item $x\wedge x'\leq y\vee y'$,
\item $x\wedge(x'\vee y)\approx(x\wedge x')\vee(x\wedge y)$.
\end{enumerate}
$\mathbf L$ is called a {\em Kleene algebra} if it is a distributive pseudo-Kleene algebra. In this case (2) can be omitted since it follows by distributivity. Kleene algebras and pseudo-Kleene algebras are considered as an algebraic axiomatization of a propositional logic satisfying De Morgan's laws and the double negation law, but not necessarily the law of excluded middle because the antitone involution $'$ need not be a complementation, see also \cite J. An example of a Kleene algebra is depicted in Figure~4:
\vspace*{-2mm}
\begin{center}
\setlength{\unitlength}{7mm}
\begin{picture}(4,10)
\put(2,1){\circle*{.3}}
\put(2,3){\circle*{.3}}
\put(1,5){\circle*{.3}}
\put(3,5){\circle*{.3}}
\put(2,7){\circle*{.3}}
\put(2,9){\circle*{.3}}
\put(2,3){\line(0,-1)2}
\put(2,3){\line(-1,2)1}
\put(2,3){\line(1,2)1}
\put(2,7){\line(-1,-2)1}
\put(2,7){\line(1,-2)1}
\put(2,7){\line(0,1)2}
\put(1.85,.25){$0$}
\put(2.4,2.85){$a$}
\put(.3,4.85){$b$}
\put(3.4,4.85){$b'$}
\put(2.4,6.85){$a'$}
\put(1.85,9.4){$1$}
\put(1.2,-.75){{\rm Fig.\ 4}}
\end{picture}
\end{center}
\vspace*{4mm}
A pseudo-Kleene algebra which is not a Kleene algebra is visualized in Figure~5:
\vspace*{-2mm}
\begin{center}
\setlength{\unitlength}{7mm}
\begin{picture}(4,14)
\put(2,1){\circle*{.3}}
\put(1,3){\circle*{.3}}
\put(3,4){\circle*{.3}}
\put(1,5){\circle*{.3}}
\put(2,7){\circle*{.3}}
\put(3,9){\circle*{.3}}
\put(1,10){\circle*{.3}}
\put(3,11){\circle*{.3}}
\put(2,13){\circle*{.3}}
\put(2,1){\line(-1,2)1}
\put(2,1){\line(1,3)1}
\put(1,3){\line(0,1)2}
\put(3,4){\line(-1,3)2}
\put(1,5){\line(1,2)2}
\put(3,9){\line(0,1)2}
\put(1,10){\line(1,3)1}
\put(3,11){\line(-1,2)1}
\put(1.85,.25){$0$}
\put(.3,2.85){$a$}
\put(3.4,3.85){$c$}
\put(.3,4.85){$b$}
\put(2.4,6.85){$d=d'$}
\put(3.4,8.85){$b'$}
\put(.3,9.85){$c'$}
\put(3.4,10.85){$a'$}
\put(1.85,13.4){$1$}
\put(1.2,-.75){{\rm Fig.\ 5}}
\end{picture}
\end{center}
\vspace*{4mm}
One can easily check that if $\mathbf L$ satisfies (1) and (2) then so does $\mathbb E(\mathbf L)$ and if $\mathbf L$ is distributive then also $\mathbb E(\mathbf L)$ has this property, too. Hence, we can state

\begin{corollary}
Every pseudo-Kleene algebra or every Kleene algebra can be extended to a residuated pseudo-Kleene algebra or a residuated Kleene-algebra, respectively.
\end{corollary}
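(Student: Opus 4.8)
The plan is to verify that the lattice $\mathbb E(\mathbf L)$ produced by the construction of Theorem~\ref{th1} --- which is a residuated lattice carrying an antitone involution $'$ with $x'=x\rightarrow0$, by the preceding corollary --- also satisfies the pseudo-Kleene identities (1) and (2), and is distributive whenever $\mathbf L$ is. The observation that makes this routine is that $\mathbb E(\mathbf L)$ arises from the lattice $\mathbf L$ by adjoining a new bottom and a new top interchanged by $'$: twice in general (first $c_2,c_3$, then $c_1=0$ and $c_4=1$), and once if for bounded $\mathbf L$ one uses the identification permitted in the parenthetical part of Theorem~\ref{th1}. It therefore suffices to establish a single stability lemma: if $\mathbf M=(M,\vee,\wedge,{}')$ is a lattice with an antitone involution and $\mathbf M^{+}$ is obtained from $\mathbf M$ by adjoining a new least element $\bot$ and a new greatest element $\top$ with $\bot':=\top$ and $\top':=\bot$, then $'$ remains an antitone involution on $\mathbf M^{+}$, and $\mathbf M^{+}$ satisfies (1), satisfies (2), and is distributive whenever $\mathbf M$ has the corresponding property.

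To prove this lemma I would first record two facts. First, $\mathbf M$ is a sublattice of $\mathbf M^{+}$: for $a,b\in M$ the lower bounds of $\{a,b\}$ in $\mathbf M^{+}$ are exactly those in $\mathbf M$ together with $\bot$, so the meet computed in $\mathbf M^{+}$ coincides with $a\wedge b$, and dually for joins. Second, $'$ is an antitone involution on $\mathbf M^{+}$, being an involution by construction and order-reversing because $\bot$ is least and $\top$ greatest. Together these facts yield that every instance of (1), of (2), or of the distributive law whose variables all lie in $M$ holds in $\mathbf M^{+}$ simply because it holds in $\mathbf M$.

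It then remains to deal with the instances in which at least one variable is $\bot$ or $\top$, and each of these degenerates. For (1): in $\mathbf M^{+}$ one has $x\wedge x'\in\{\bot\}\cup M$ and $y\vee y'\in\{\top\}\cup M$, so $x\wedge x'\leq y\vee y'$ holds by the corresponding instance in $\mathbf M$ when both sides lie in $M$ and trivially otherwise. For (2) and for distributivity, each boundary case collapses via $x\wedge\bot=\bot$, $x\vee\top=\top$, $\bot\vee z=z$, $\top\wedge z=z$ (together with $\bot'=\top$, $\top'=\bot$); for instance, in (2) the case $x=\top$ reduces to $y=\bot\vee y$ and the case $y=\bot$ reduces to $x\wedge x'=x\wedge x'$, and the remaining cases have the same flavour. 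This settles the stability lemma; applying it at most twice shows that $\mathbb E(\mathbf L)$ is a residuated pseudo-Kleene algebra, and, if moreover $\mathbf L$ is distributive, i.e.\ a Kleene algebra, that $\mathbb E(\mathbf L)$ is again distributive, hence a residuated Kleene algebra. I do not expect a real obstacle: the argument is a short case analysis, the only point deserving a moment's care being the sublattice claim, which is precisely what allows the ``interior'' instances of (1), (2) and distributivity to be reduced to $\mathbf L$.
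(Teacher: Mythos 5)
Your proof is correct and takes essentially the same approach as the paper, which offers no argument beyond the remark that ``one can easily check'' that $\mathbb E(\mathbf L)$ inherits (1), (2) and distributivity from $\mathbf L$; that check is exactly what you carry out. Organizing it as a stability lemma for adjoining a new bottom--top pair swapped by $'$, applied twice, is a clean way of supplying the case analysis the paper omits, and all the boundary cases you list do degenerate as claimed.
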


Hence, also the logic axiomatized by Kleene algebras or pseudo-Kleene algebras can be extended to a kind of fuzzy logics.

Motivated by the previous, we can extend every poset with an antitone involution to a residuated poset by means of a finite chain with an even number of at least four elements. The precise formulation is as follows:

\begin{theorem}\label{th2}
Let $(P,\leq,{}')$ be a poset with an antitone involution, $n$ an integer $>1$, $0=c_1,\ldots,c_{2n}=1\notin P$ and $Q:=P\cup\{c_1,\ldots,c_{2n}\}$. Extend $\leq$ and $'$ from $P$ to $Q$ by
\[
c_1<\cdots<c_n<x<c_{n+1}<\cdots<c_{2n}
\]
for $x\in P$ and put $c_i':=c_{2n+1-i}$ for $i=1,\ldots,2n$. Define binary operations $\odot$ and $\rightarrow$ on $Q$ as follows:
\begin{align*}
& 0\odot x=x\odot0:=0, 1\odot x=x\odot1:=x, \\
& 0\rightarrow x=x\rightarrow1:=1, x\rightarrow0:=x', 1\rightarrow x:=x
\end{align*}
for $x\in Q$ and
\[
x\odot y:=\left\{
\begin{array}{ll}
0   & \text{if }x\leq y', \\
c_2 & \text{otherwise} 
\end{array}
\right.
\quad\quad\quad x\rightarrow y:=\left\{
\begin{array}{ll}
1        & \text{if }x\leq y, \\
c_{2n-1} & \text{otherwise}
\end{array}
\right.
\]
\[
c_i\odot c_j:=\left\{
\begin{array}{ll}
0   & \text{if }i+j\leq2n+1, \\
c_2 & \text{otherwise} 
\end{array}
\right.
\quad\quad\quad c_i\rightarrow c_j:=\left\{
\begin{array}{ll}
1        & \text{if }i\leq j, \\
c_{2n-1} & \text{otherwise}
\end{array}
\right.
\]
\[
c_i\odot x=x\odot c_i:=c_i\odot c_{n+1}, c_i\rightarrow x:=c_i\rightarrow c_n, x\rightarrow c_i:=c_{n+1}\rightarrow c_i
\]
for $x,y\in P$ and $i,j=2,\ldots,2n-1$. Then $(Q,\leq,\odot,\rightarrow,1)$ is a residuated poset with the antitone involution $'$ satisfying $x'=x\rightarrow0$ for all $x\in Q$.
\end{theorem}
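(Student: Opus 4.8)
The plan is to mimic the proof of Theorem~\ref{th1}, the longer chain affecting only the bookkeeping. First I would check that $(Q,\leq,{}',0,1)$ is a bounded poset with an antitone involution: the order on $Q$ stacks the chain $c_1<\cdots<c_n$ below $P$ and the chain $c_{n+1}<\cdots<c_{2n}$ above $P$, so transitivity and antisymmetry are immediate, $c_i''=c_{2n+1-(2n+1-i)}=c_i$ and $x''=x$ for $x\in P$ give the involution, and antitonicity of $'$ is obtained by distinguishing whether the two compared elements both lie in $P$, both lie in the chain, or one in each, using $c_n<x<c_{n+1}$ for $x\in P$.

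The key reduction is to replace the piecewise definitions by uniform descriptions valid for all $a,b\in Q$: for $\odot$, the product is $0$ whenever $0\in\{a,b\}$, it equals $b$ if $a=1$ and $a$ if $b=1$, and otherwise (i.e.\ for $a,b\in Q\setminus\{0,1\}$) it is $0$ when $a\leq b'$ and $c_2$ when $a\not\leq b'$; for $\rightarrow$, the value is $1$ when $a=0$ or $b=1$, it equals $b$ if $a=1$, it equals $a'$ if $b=0$, and otherwise it is $1$ when $a\leq b$ and $c_{2n-1}$ when $a\not\leq b$. Establishing these descriptions is the step where the case $x\in P$ meets the case $c_i$: one must check, for instance, that $c_i\odot c_{n+1}$ equals $0$ exactly when $c_i\leq x'$ for $x\in P$ --- which works since $c_i\leq x'$ iff $i\leq n$ iff $i+(n+1)\leq 2n+1$ --- and likewise that $c_i\rightarrow c_n$, resp.\ $c_{n+1}\rightarrow c_i$, takes the value $1$ exactly when $c_i\leq x$, resp.\ $x\leq c_i$. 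Here $n>1$ is used to ensure $n,n+1\in\{2,\dots,2n-1\}$, so that the referenced values are genuinely governed by the $c_i\odot c_j$ and $c_i\rightarrow c_j$ clauses. I would also record the two order facts that make everything collapse: $c_2\leq z$ and $z\leq c_{2n-1}$ for every $z\in Q\setminus\{0,1\}$ (for $z\in P$ these use $c_2\leq c_n<z<c_{n+1}\leq c_{2n-1}$).

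With the uniform descriptions in hand the remaining axioms are routine. Commutativity of $\odot$ follows from $a\leq b'\Leftrightarrow b\leq a'$; that $1$ is neutral and $0$ absorbing is built in. For associativity one splits off the cases where some argument is $0$ or $1$ (trivial, using neutrality of $1$), and in the case $a,b,c\in Q\setminus\{0,1\}$ one observes $a\odot b\in\{0,c_2\}$ and $c_2\odot c=0$ for every such $c$ (because $c'\in Q\setminus\{0,1\}$, hence $c_2\leq c'$), whence both $(a\odot b)\odot c$ and $a\odot(b\odot c)$ equal $0$. For adjointness $a\odot b\leq c\Leftrightarrow a\leq b\rightarrow c$ one again separates out the cases in which an argument equals $0$ or $1$; in the remaining case $a,b,c\in Q\setminus\{0,1\}$ both sides hold automatically, since $a\odot b\in\{0,c_2\}$ with $c_2\leq c$ and $b\rightarrow c\in\{1,c_{2n-1}\}$ with $a\leq c_{2n-1}$. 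Finally $x'=x\rightarrow0$ for all $x\in Q$ is immediate from $x\rightarrow0:=x'$ (for $x\in P$), from $c_i\rightarrow0=c_i'$, and from $0'=1$, $1'=0$. The main obstacle is precisely the gluing step: confirming that the auxiliary assignments $c_i\odot x:=c_i\odot c_{n+1}$, $c_i\rightarrow x:=c_i\rightarrow c_n$ and $x\rightarrow c_i:=c_{n+1}\rightarrow c_i$ are exactly what the uniform descriptions require; once that is checked, the remaining verification is the same collapse as in Theorem~\ref{th1}.
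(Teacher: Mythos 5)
Your proof is correct, but it is organized differently from the paper's. The paper does not re-verify the residuation axioms at all: it observes that $Q$ is exactly $\mathbb E(\mathbf P^*)$ for the intermediate poset $\mathbf P^*=(P\cup\{c_3,\ldots,c_{2n-2}\},\leq,{}')$ (which is closed under $'$ since $i\mapsto 2n+1-i$ maps $\{3,\ldots,2n-2\}$ to itself), with $c_1,c_2,c_{2n-1},c_{2n}$ playing the roles of $0,c_2,c_3,1$ from Theorem~\ref{th1}, and then simply checks that the operations defined in the statement of Theorem~\ref{th2} coincide with those produced by Theorem~\ref{th1}. That matching computation is precisely your ``gluing step'': e.g.\ $c_i\odot c_{n+1}=0$ iff $i\le n$ iff $c_i\le x'$ for $x\in P$, and similarly for $c_i\rightarrow c_n$ and $c_{n+1}\rightarrow c_i$; so the mathematical core of the two arguments is identical. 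What differs is that you then re-run the whole verification (uniform description on $Q\setminus\{0,1\}$, the facts $c_2\le z\le c_{2n-1}$, the collapse of associativity and adjointness to $a\odot b\in\{0,c_2\}$ and $b\rightarrow c\in\{1,c_{2n-1}\}$), whereas the paper imports all of that from Theorem~\ref{th1} as a black box. The reduction is more economical and less error-prone; your direct route is self-contained and makes explicit the uniform descriptions that the paper leaves implicit in the formula comparison. Your use of $n>1$ to guarantee $n,n+1\in\{2,\ldots,2n-1\}$ and $c_2\le c_n<x<c_{n+1}\le c_{2n-1}$ is exactly the point where the hypothesis is needed, and you identify it correctly.
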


\begin{proof}
We apply Theorem~\ref{th1} to the poset $(P\cup\{c_3,\ldots,c_{2n-2}\},\leq,{}')$ with the antitone involution $'$. The element $c_3$ of Theorem~\ref{th1} corresponds to the element $c_{2n-1}$ of Theorem~\ref{th2}. According to Theorem~\ref{th1} we have for all $x\in Q$
\begin{align*}
& 0\odot x=x\odot0=0, 1\odot x=x\odot1=x, \\
& 0\rightarrow x=x\rightarrow1=1, x\rightarrow0=x', 1\rightarrow x=x.
\end{align*}
Moreover, we have for all $x,y\in P$ and $i,j\in\{2,\ldots,2n-1\}$
\[
x\odot y=\left\{
\begin{array}{ll}
0   & \text{if }x\leq y', \\
c_2 & \text{otherwise} 
\end{array}
\right.
\quad\quad\quad x\rightarrow y=\left\{
\begin{array}{ll}
1        & \text{if }x\leq y, \\
c_{2n-1} & \text{otherwise}
\end{array}
\right.
\]
\[
c_i\odot c_j=\left\{
\begin{array}{ll}
0   & \text{if }i\leq2n+1-j, \\
c_2 & \text{otherwise} 
\end{array}
\right.
\quad\quad\quad c_i\rightarrow c_j=\left\{
\begin{array}{ll}
1        & \text{if }i\leq j, \\
c_{2n-1} & \text{otherwise}
\end{array}
\right.
\]
\[
c_i\odot x=x\odot c_i=\left\{
\begin{array}{ll}
0   & \text{if }i\leq n, \\
c_2 & \text{otherwise} 
\end{array}
\right.
\]
\[
c_i\rightarrow x=\left\{
\begin{array}{ll}
1        & \text{if }i\leq n, \\
c_{2n-1} & \text{otherwise}
\end{array}
\right.
\quad\quad\quad x\rightarrow c_i=\left\{
\begin{array}{ll}
1        & \text{if }i>n, \\
c_{2n-1} & \text{otherwise}
\end{array}
\right.
\]
\end{proof}

The poset $(Q,\leq)$ of Theorem~\ref{th2} is visualized in Figure~6:
\begin{center}
\setlength{\unitlength}{7mm}
\begin{picture}(2,6)
\put(1,0){\circle*{.3}}
\put(1,2){\circle*{.3}}
\put(1,4){\circle*{.3}}
\put(1,6){\circle*{.3}}
\put(1,0){\line(0,1)2}
\put(1,4){\line(0,1)2}
\put(1.4,-.15){$c_1=0$}
\put(1.4,.75){$\vdots$}
\put(1.4,1.85){$c_n$}
\put(.75,2.8){$P$}
\put(1.4,3.85){$c_{n+1}$}
\put(1.4,4.75){$\vdots$}
\put(1.4,5.85){$c_{2n}=1$}
\put(.2,-.95){{\rm Fig.\ 6}}
\end{picture}
\end{center}
\vspace*{4mm}
Let $P$ be a set and $\mathbf P_1=(P\times\{1\},\leq)$ and $\mathbf P_2=(P\times\{2\},\leq)$ posets. We call $\mathbf P_2$ the {\em dual} of $\mathbf P_1$ if for all $x,y\in P$ we have $(x,2)\leq(y,2)$ if and only if $(y,1)\leq(x,1)$. Similarly as before, a poset together with its dual can be extended to a residuated poset by means of a finite chain with at least four elements.

\begin{theorem}\label{th3}
Let $(P\times\{1\},\leq)$ be a poset, $(P\times\{2\},\leq)$ denote its dual, $n$ an integer $>1$, $k$ a non-negative integer, $0=c_1,\ldots,c_{2n+k}=1\notin P\times\{1,2\}$ and $Q:=(P\times\{1,2\})\cup\{c_1,\ldots,c_{2n+k}\}$. Extend $\leq$ to $Q$ by
\[
c_1<\cdots<c_n<(x,1)<c_{n+1}<\cdots<c_{n+k}<(y,2)<c_{n+k+1}<\cdots<c_{2n+k}
\]
for $x,y\in P$ and put $c_i':=c_{2n+k+1-i}$ for $i=1,\ldots,2n+k$ and $(x,i)':=(x,3-i)$ for $x\in P$ and $i\in\{1,2\}$. Define binary operations $\odot$ and $\rightarrow$ on $Q$ as follows:
\begin{align*}
& 0\odot x=x\odot0:=0, 1\odot x=x\odot1:=x, \\
& 0\rightarrow x=x\rightarrow1:=1, x\rightarrow0:=x', 1\rightarrow x:=x
\end{align*}
for $x\in Q$,
\begin{align*}
(x,i)\odot(y,j) & :=\left\{
\begin{array}{ll}
0   & \text{if }i=j=1\text{ or }((i,j)=(1,2)\text{ and }x\leq y)\text{ or} \\
    & (i,j)=(2,1)\text{ and }y\leq x, \\
c_2 & \text{otherwise} 
\end{array}
\right. \\
(x,i)\rightarrow(y,j) & :=\left\{
\begin{array}{ll}
1          & \text{if }(i=j=1\text{ and }x\leq y)\text{ or }(i,j)=(1,2)\text{ or} \\
           & (i=j=2\text{ and }y\leq x), \\
c_{2n+k-1} & \text{otherwise}
\end{array}
\right.
\end{align*}
for $x,y\in P$ and $i,j\in\{1,2\}$ and
\[
c_i\odot c_j:=\left\{
\begin{array}{ll}
0   & \text{if }i+j\leq2n+k+1, \\
c_2 & \text{otherwise} 
\end{array}
\right.
\quad\quad\quad c_i\rightarrow c_j:=\left\{
\begin{array}{ll}
1          & \text{if }i\leq j, \\
c_{2n+k-1} & \text{otherwise}
\end{array}
\right.
\]
\begin{align*}
& c_i\odot(x,1)=(x,1)\odot c_i:=c_i\odot c_{n+1}, c_i\rightarrow(x,1):=c_i\rightarrow c_n, (x,1)\rightarrow c_i:=c_{n+1}\rightarrow c_i, \\
& c_i\odot(x,2)=(x,2)\odot c_i:=c_i\odot c_{n+k+1}, c_i\rightarrow(x,2):=c_i\rightarrow c_{n+k}, \\
& (x,2)\rightarrow c_i:=c_{n+k+1}\rightarrow c_i
\end{align*}
for $i,j\in\{2,\ldots,2n+k-1\}$ and $x\in P$. Then $(Q,\leq,\odot,\rightarrow,1)$ is a residuated poset with an antitone involution $'$ satisfying $x'=x\rightarrow0$ for all $x\in Q$.
\end{theorem}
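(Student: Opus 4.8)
The plan is to derive Theorem~\ref{th3} from Theorem~\ref{th1} in exactly the way Theorem~\ref{th2} was derived from it, the only new feature being that the ``inner'' poset now carries the swap involution between a poset and its dual. Explicitly, I would apply Theorem~\ref{th1} to
\[
\mathbf S:=\bigl((P\times\{1,2\})\cup\{c_3,\ldots,c_{2n+k-2}\},\leq,{}'\bigr),
\]
the relations being the restrictions of the ones introduced in the statement. The first step is to verify that $\mathbf S$ is a poset with an antitone involution. Its order is the ordinal sum of the chain $c_3<\cdots<c_n$, then $P\times\{1\}$, then the chain $c_{n+1}<\cdots<c_{n+k}$, then $P\times\{2\}$, then the chain $c_{n+k+1}<\cdots<c_{2n+k-2}$; the map $'$ sends $c_i$ to $c_{2n+k+1-i}$ and $(x,i)$ to $(x,3-i)$, hence restricts to an order-reversing involutory bijection of each summand onto its mirror summand. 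Because $P\times\{2\}$ is by definition the dual of $P\times\{1\}$, the middle anti-isomorphism is available, and gluing the five pieces shows that $'$ is an order-reversing involution of $\mathbf S$ onto itself; in particular $\{c_3,\ldots,c_{2n+k-2}\}$ is closed under $'$.

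By Theorem~\ref{th1}, $\mathbb E(\mathbf S)$ is obtained from $\mathbf S$ by adjoining four elements, say $d_1<d_2$ below $\mathbf S$ and $d_3<d_4$ above $\mathbf S$ with $d_1'=d_4$ and $d_2'=d_3$, and $\mathbb E(\mathbf S)$ is then a residuated poset with the antitone involution extending $'$ and satisfying $x'=x\rightarrow0$. Identifying $d_1,d_2,d_3,d_4$ with $c_1,c_2,c_{2n+k-1},c_{2n+k}$ respectively (note $c_{2n+k-1}=c_2'$, so this identification is compatible with $'$), the underlying set of $\mathbb E(\mathbf S)$ becomes the set $Q$ of Theorem~\ref{th3}, its order becomes precisely the one displayed there, and its involution becomes $c_i\mapsto c_{2n+k+1-i}$, $(x,i)\mapsto(x,3-i)$. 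Since associativity, commutativity and adjointness are then automatic from Theorem~\ref{th1}, it remains only to check that the operations $\odot$ and $\rightarrow$ furnished by Theorem~\ref{th1} agree, under this identification, with the ones written down in Theorem~\ref{th3}.

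That is a bounded case analysis according to where the two arguments lie. When an argument is $c_1=0$ or $c_{2n+k}=1$ the values are literally the defining clauses of Theorem~\ref{th1}. When both arguments lie in $P\times\{1,2\}$, Theorem~\ref{th1} gives $u\odot v=0$ iff $u\leq v'$ and $u\rightarrow v=1$ iff $u\leq v$; substituting $(y,j)'=(y,3-j)$ and reading off comparisons in the ordinal sum ($(x,1)\leq(y,2)$ always; $(x,2)\not\leq(y,1)$ never; $(x,1)\leq(y,1)$ iff $x\leq y$; $(x,2)\leq(y,2)$ iff $y\leq x$ by duality) yields exactly the four-case descriptions of $(x,i)\odot(y,j)$ and $(x,i)\rightarrow(y,j)$. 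When both arguments are among $c_2,\ldots,c_{2n+k-1}$ one obtains $c_i\odot c_j=0$ iff $i+j\leq2n+k+1$ and $c_i\rightarrow c_j=1$ iff $i\leq j$: for $i,j\in\{3,\ldots,2n+k-2\}$ this is immediate from $c_j'=c_{2n+k+1-j}$, and the instances in which $i$ or $j$ equals $2$ or $2n+k-1$ (so that $c_i$ or $c_j$ is one of the adjoined elements $d_2,d_3$) follow from the Theorem~\ref{th1} clauses governing $d_2$ and $d_3$, using that $c_2$ lies below and $c_{2n+k-1}$ above every element of $\mathbf S$. Finally, for a mixed pair, e.g.\ $c_i$ with $i\in\{2,\ldots,2n+k-1\}$ and $(x,1)$, the value is forced by $(x,1)$ lying strictly between $c_n$ and $c_{n+1}$ and $(x,1)'=(x,2)$ lying strictly between $c_{n+k}$ and $c_{n+k+1}$: Theorem~\ref{th1} gives $c_i\odot(x,1)=0$ iff $c_i\leq(x,2)$ iff $i\leq n+k$, which is $c_i\odot c_{n+1}$; $c_i\rightarrow(x,1)=1$ iff $c_i\leq(x,1)$ iff $i\leq n$, which is $c_i\rightarrow c_n$; and $(x,1)\rightarrow c_i=1$ iff $(x,1)\leq c_i$ iff $i\geq n+1$, which is $c_{n+1}\rightarrow c_i$; the pairs involving $(x,2)$ are identical with $c_n,c_{n+1}$ replaced by $c_{n+k},c_{n+k+1}$.

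I expect the only genuine effort — and the only place to slip — to be this bookkeeping: keeping the index arithmetic aligned (in particular that the ``otherwise'' constant $c_{2n+k-1}$ is the upper adjoined element $c_2'$, and that the intermediate chain $c_{n+1},\ldots,c_{n+k}$ belongs to $\mathbf S$ and hence is \emph{not} among the four adjoined elements), and checking the degenerate instances — $k=0$, or $P=\emptyset$, or $2n+k-4=0$ so that $\mathbf S$ has no chain part at all — in which several displayed clauses coincide but remain mutually consistent. There is no conceptual difficulty beyond recognizing $\mathbf S$ as the correct input for Theorem~\ref{th1}.
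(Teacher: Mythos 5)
Your proposal is correct and follows essentially the same route as the paper: apply Theorem~\ref{th1} to the inner poset $(P\times\{1,2\})\cup\{c_3,\ldots,c_{2n+k-2}\}$ with the order-reversing swap involution, identify the four adjoined elements with $c_1,c_2,c_{2n+k-1},c_{2n+k}$, and verify by case analysis that the induced operations coincide with the stated ones. Your index checks (e.g.\ $c_i\odot(x,1)=0$ iff $i\le n+k$ iff $c_i\odot c_{n+1}=0$) all come out the same as the paper's displayed formulas, and you are if anything more explicit about why the swap is an antitone involution of the ordinal sum.
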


\begin{proof}
We apply Theorem~\ref{th1} to the poset $(P\cup\{c_3,\ldots,c_{2n+k-2}\},\leq,{}')$ with the antitone involution $'$. The element $c_3$ of Theorem~\ref{th1} corresponds to the element $c_{2n+k-1}$ of Theorem~\ref{th3}. According to Theorem~\ref{th1} we have for all $x\in Q$
\begin{align*}
& 0\odot x=x\odot0=0, 1\odot x=x\odot1=x, \\
& 0\rightarrow x=x\rightarrow1=1, x\rightarrow0=x', 1\rightarrow x=x.
\end{align*}
Moreover, we have for all $x,y\in P$ and $i,j\in\{1,2\}$
\begin{align*}
(x,i)\odot(y,j) & =\left\{
\begin{array}{ll}
0   & \text{if }(x,i)\leq(y,3-j), \\
c_2 & \text{otherwise} 
\end{array}
\right. \\
(x,i)\rightarrow(y,j) & =\left\{
\begin{array}{ll}
1          & \text{if }(x,i)\leq(y,j), \\
c_{2n+k-1} & \text{otherwise}
\end{array}
\right.
\end{align*}
Further, we have for all $i,j\in\{2,\ldots,2n+k-1\}$
\[
c_i\odot c_j=\left\{
\begin{array}{ll}
0   & \text{if }i\leq2n+k+1-j, \\
c_2 & \text{otherwise} 
\end{array}
\right.
\quad\quad\quad c_i\rightarrow c_j=\left\{
\begin{array}{ll}
1          & \text{if }i\leq j, \\
c_{2n+k-1} & \text{otherwise}
\end{array}
\right.
\]
Finally, we have for all $i\in\{2,\ldots,2n+k-1\}$ and $j\in\{1,2\}$
\[
c_i\odot(x,j)=(x,j)\odot c_i=\left\{
\begin{array}{ll}
0   & \text{if }c_i\leq(x,3-j), \\
c_2 & \text{otherwise} 
\end{array}
\right.
\]
\[
c_i\rightarrow(x,j)=\left\{
\begin{array}{ll}
1          & \text{if }c_i\leq(x,j), \\
c_{2n+k-1} & \text{otherwise}
\end{array}
\right.
\quad\quad\quad(x,j)\rightarrow c_i=\left\{
\begin{array}{ll}
1          & \text{if }(x,j)\leq c_i, \\
c_{2n+k-1} & \text{otherwise}
\end{array}
\right.
\]
\end{proof}

The poset $(Q,\leq)$ of Theorem~\ref{th2} is visualized in Figure~7:
\begin{center}
\setlength{\unitlength}{7mm}
\begin{picture}(2,10)
\put(1,0){\circle*{.3}}
\put(1,2){\circle*{.3}}
\put(1,4){\circle*{.3}}
\put(1,6){\circle*{.3}}
\put(1,8){\circle*{.3}}
\put(1,10){\circle*{.3}}
\put(1,0){\line(0,1)2}
\put(1,4){\line(0,1)2}
\put(1,8){\line(0,1)2}
\put(1.4,.75){$\vdots$}
\put(1.4,-.15){$c_1=0$}
\put(1.4,1.85){$c_n$}
\put(.75,2.8){$P$}
\put(1.4,3.85){$c_{n+1}$}
\put(1.4,4.75){$\vdots$}
\put(1.4,5.85){$c_{n+k}$}
\put(.75,6.8){$P^d$}
\put(1.4,7.85){$c_{n+k+1}$}
\put(1.4,8.75){$\vdots$}
\put(1.4,9.85){$c_{2n+k}=1$}
\put(.2,-.95){{\rm Fig.\ 7}}
\end{picture}
\end{center}
\vspace*{4mm}
It is obvious that the constructions described in Theorems~\ref{th2} and \ref{th3} can be generalized to finitely many posets and finite chains in between them. For instance, if we have posets $P_1,P_2,P_3$, their duals $P_1^d,P_2^d,P_3^d$ and a poset $P$ with an antitone involution then we can construct, similarly as before, a poset with an antitone involution of the form
\[
\cdots<x_1<\cdots<x_2<\cdots<x_3<\cdots<x<\cdots<y_3<\cdots<y_2<\cdots<y_1<\cdots
\]
for all $x_1\in P_1,x_2\in P_2,x_3\in P_3,x\in P,y_3\in P_3^d,y_2\in P_2^d,y_1\in P_1^d$ where the dots (from left to right) indicate finite chains with $r,s,t,u,u,t,s,r$ elements, respectively, and then we can apply Theorem~\ref{th1} to this poset with antitone involution.

The following result is well known, see e.g.\ \cite B or \cite{WD}.

\begin{lemma}\label{lem2}
Let $(B,\vee,\wedge,{}',p,q)$ be a Boolean algebra and put
\begin{align*}
      x\odot y & :=x\wedge y, \\
x\rightarrow y & :=x'\vee y
\end{align*}
for every $x,y\in B$. Then $(B,\leq,\odot,\rightarrow,q)$ is a residuated poset.
\end{lemma}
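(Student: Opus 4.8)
The plan is to verify, clause by clause, that $(B,\leq,\odot,\rightarrow,q)$ satisfies the definition of a residuated poset, where $\leq$ denotes the Boolean order (so that $q$ is the top and $p$ the bottom element), $\odot=\wedge$ and $x\rightarrow y=x'\vee y$.

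First I would note that $(B,\leq,q)$ is a poset with greatest element $q$, since a Boolean algebra is in particular a bounded lattice, and that $(B,\odot,q)=(B,\wedge,q)$ is a commutative monoid, because $\wedge$ is commutative and associative in any lattice and $x\wedge q=x$ for all $x\in B$, so $q$ is a neutral element. That $\rightarrow$ is a binary operation on $B$ is immediate from its definition. Hence the whole statement reduces to checking the adjointness law, i.e.\ that
\[
a\wedge b\leq c\quad\Longleftrightarrow\quad a\leq b'\vee c\qquad\text{for all }a,b,c\in B.
\]

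For the implication from left to right I would use $b\vee b'=q$ together with distributivity to write $a=a\wedge(b\vee b')=(a\wedge b)\vee(a\wedge b')$; the first join term lies below $c$ by hypothesis and the second lies below $b'$, whence $a\leq c\vee b'=b'\vee c=b\rightarrow c$. For the reverse implication I would use $b\wedge b'=p$ and distributivity: from $a\leq b'\vee c$ we get $a\wedge b\leq(b'\vee c)\wedge b=(b'\wedge b)\vee(c\wedge b)=p\vee(c\wedge b)=c\wedge b\leq c$. This completes the argument.

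There is no real obstacle here; the verification is routine. The only point I would emphasize is that each direction of the adjointness equivalence consumes one of the two complementation identities ($b\vee b'=q$ in one direction, $b\wedge b'=p$ in the other), in addition to distributivity — which is precisely why one needs $B$ to be a Boolean algebra rather than merely a bounded distributive lattice with an antitone involution, in harmony with the $\mathbf N_5$ obstruction recalled at the beginning of the paper.
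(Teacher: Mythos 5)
Your verification is correct: the monoid and poset clauses are immediate, and your two-directional check of adjointness, using $b\vee b'=q$ with distributivity for one implication and $b\wedge b'=p$ with distributivity for the other, is the standard argument. The paper itself offers no proof of this lemma --- it is stated as well known with references to B\v elohl\'avek and to Ward--Dilworth --- so there is no authorial proof to compare against; your write-up simply supplies the routine details the authors chose to omit, and your closing remark about why both complementation identities (and not merely an antitone involution on a distributive lattice) are needed is a correct and worthwhile observation.
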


In the following we extend a Boolean algebra $(B,\vee,\wedge,{}',p,q)$ to a residuated lattice $(Q,\vee,\wedge,\odot,\rightarrow,1)$
with an antitone involution $x'=x\rightarrow0$ by means of a (finite) chain such that the operations $\odot$ and $\rightarrow$ coincide on $B$ with those mentioned in Lemma~\ref{lem2}.

\begin{theorem}\label{th5}
Let $(B,\vee,\wedge,{}',p,q)$ be a Boolean algebra, $n$ a positive integer, $0=c_1,\ldots$ $\ldots,c_{2n}=1\notin B$ and $Q:=B\cup\{c_1,\ldots,c_{2n}\}$. Extend $\leq$ and $'$ from $B$ to $Q$ by
\[
c_1<\cdots<c_n<x<c_{n+1}<\cdots<c_{2n}
\]
for $x\in B$ and put $c_i':=c_{2n+1-i}$ for $i=1,\ldots,2n$. Define binary operations $\odot$ and $\rightarrow$ on $Q$ as follows:
\[
x\odot y:=\left\{
\begin{array}{ll}
0         & \text{if }x\leq y', \\
x\wedge y & \text{otherwise} 
\end{array}
\right.
\quad\quad\quad x\rightarrow y:=\left\{
\begin{array}{ll}
1        & \text{if }x\leq y, \\
x'\vee y & \text{otherwise}
\end{array}
\right.
\]
for $x,y\in Q$. Then $(Q,\leq,\odot,\rightarrow,1)$ is a residuated lattice with an antitone involution $x'=x\rightarrow0$ where for all $x,y\in B$ we have
\begin{align*}
      x\odot y & =x\wedge y, \\
x\rightarrow y & =x'\vee y.
\end{align*}
\end{theorem}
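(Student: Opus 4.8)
The plan is to verify the defining clauses of a residuated poset for $(Q,\leq,\odot,\rightarrow,1)$ — that $(Q,\leq,1)$ is a poset with greatest element, $(Q,\odot,1)$ a commutative monoid, and that adjointness $a\odot b\leq c\iff a\leq b\rightarrow c$ holds — together with the facts that $(Q,\vee,\wedge)$ is a lattice, that $'$ is an antitone involution with $x'=x\rightarrow0$, and that $\odot,\rightarrow$ restrict to $B$ as claimed. The easy half comes first. As a poset, $Q$ arises from the bounded lattice $B$ by adjoining the chain $L:=\{c_1,\dots,c_n\}$ below it and the chain $U:=\{c_{n+1},\dots,c_{2n}\}$ above it, hence is a bounded lattice with least element $0=c_1$ and greatest element $1=c_{2n}$; and $'$ is an antitone involution on $Q$ because it is the Boolean complement on $B$, maps $L$ bijectively and order-reversingly onto $U$ (with $c_n'=c_{n+1}$ lying above $B$ and $c_{n+1}'=c_n$ below it), and these pieces fit together monotonically. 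The identity $x\rightarrow0=x'$ is a one-line check: $0\rightarrow0=1=0'$, and for $x\neq0$ we have $x\not\leq0$, so $x\rightarrow0=x'\vee0=x'$. Commutativity of $\odot$ follows from $x\leq y'\iff y\leq x'$ (as $'$ is an antitone involution) and commutativity of $\wedge$; that $1$ is the monoid unit, $0$ an annihilator, and $x\odot y\leq x\wedge y$, are read directly off the definition.

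The substance is associativity of $\odot$ and adjointness. For both I would argue by cases according to the block $L$, $B$ or $U$ to which each argument belongs, treating the extreme elements $0=c_1$ and $1=c_{2n}$ separately (trivially, since $0$ annihilates and $1$ is the unit). The facts that govern the cases are: (a) $L\cup U$ is closed under $\odot$ and $\rightarrow$, and there $c_i\odot c_j=0$ if $i+j\leq 2n+1$ and $=c_{\min(i,j)}$ otherwise, while $c_i\rightarrow c_j=1$ if $i\leq j$ and $=c_{\max(2n+1-i,\,j)}$ otherwise — this is a residuated chain, for which associativity and adjointness reduce to elementary inequalities among the indices; (b) for $c\in L$ and $b\in B$ one has $c\odot b=0$, $c\rightarrow b=1$, $b\rightarrow c=b'$; (c) for $c\in U$ and $b\in B$ one has $c\odot b=b$, $c\rightarrow b=b$, $b\rightarrow c=1$; (d) on $B$, $\odot$ and $\rightarrow$ agree with the Boolean operations $\wedge$ and $x'\vee y$ of Lemma~\ref{lem2}. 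Granting (a)--(d), a triple product with all factors in the chain is case (a); with all factors in $B$ it follows from Lemma~\ref{lem2} (the product equals the threefold Boolean meet, truncated to $0$ if that meet is the bottom of $B$); and in a mixed case any factor from $L$ sends the product to $0$ once a $B$-factor is present, a factor from $U$ is transparent to $B$-factors, and factors from $L$ and from $U$ interact only through the chain — so the two bracketings coincide. Adjointness $a\odot b\leq c\iff a\leq b\rightarrow c$ runs on the same grid: all-chain is (a), all-$B$ is Lemma~\ref{lem2} via (d), and in the mixed cases one uses $L<B<U$ pointwise, so that e.g.\ a product landing in $B$ is automatically $\leq$ every element of $U$ and $\not\leq$ every element of $L$, matching precisely the value of the relevant $b\rightarrow c$. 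Since $(Q,\vee,\wedge)$ is a lattice, the residuated poset thus obtained is a residuated lattice.

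The displayed identities for $x,y\in B$ are then immediate from the defining formulas and the Boolean equivalences $x\leq y'\iff x\wedge y=p$ and $x\leq y\iff x'\vee y=q$: the two non-default branches of the definitions (the values $0$ and $1$) are reached exactly when $x\wedge y=p$, resp.\ $x'\vee y=q$, so — with $0$ and $1$ in the role of the bounds $p$ and $q$ of $B$ — one obtains $x\odot y=x\wedge y$ and $x\rightarrow y=x'\vee y$ throughout $B$.

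I expect the main obstacle to be the bookkeeping in the associativity and adjointness case analysis: there are many type-combinations for $(x,y,z)$, and one must keep careful track of how the boundary elements $c_n,c_{n+1}$ and the bottom and top of $B$ interact with the three blocks. The genuinely delicate point is (d): a Boolean meet that collapses to the bottom of $B$ is replaced by the global $0$, and one must check — it is true — that this replacement is invisible to every comparison that actually occurs, because any element against which such a product is tested already dominates the bottom of $B$.
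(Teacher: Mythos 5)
Your proposal is correct and follows essentially the same route as the paper's proof: establish the bounded lattice with antitone involution and the identity $x\rightarrow0=x'$ directly, compute $\odot$ and $\rightarrow$ blockwise (the adjoined chain carries a residuated-chain structure governed by the index conditions $i+j\leq2n+1$ and $i\leq j$, the Boolean block reduces to Lemma~\ref{lem2}, and the mixed cases follow from the chain lying pointwise below and above $B$), and then verify associativity and adjointness by a case analysis on which block each argument lies in. The paper carries out in full the case enumeration and index arithmetic that you only outline, but your governing facts (a)--(d) are exactly the computations it uses, and you correctly isolate the one delicate point (the Boolean meet collapsing to the bottom $p$ of $B$ being replaced by the global $0$) that the paper's proof also has to absorb.
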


\begin{proof}
Let $a,b,c\in Q$, $d,e\in B$ and $i,j,k\in\{1,\ldots,2n\}$. Within this proof
\begin{align*}
& a<B\text{ means }a<x\text{ for all }x\in B, \\
& a>B\text{ means }a>x\text{ for all }x\in B, \\
& i\vee j:=\max(i,j), \\
& i\wedge j:=\min(i,j).
\end{align*}
Obviously, $(Q,\leq,{}',0,1)$ is a bounded lattice with an antitone involution. We have
\begin{align*}
      a\odot b & =b\odot a\text{ since }a\leq b'\text{ is equivalent to }b\leq a', \\
      0\odot a & =0, \\
      1\odot a & =\left\{
\begin{array}{ll}
0=a         & \text{if }a=0, \\
1\wedge a=a & \text{otherwise}
\end{array}
\right. \\
0\rightarrow a & =a\rightarrow1=1, \\
 a\rightarrow0 & =\left\{
\begin{array}{ll}
1=0'=a'    & \text{if }a=0, \\
a'\vee0=a' & \text{otherwise}
\end{array}
\right. \\
      d\odot e & =\left\{
\begin{array}{ll}
c_1=d\wedge e & \text{if }d\leq e', \\
d\wedge e     & \text{otherwise}
\end{array}
\right. \\
d\rightarrow e & =\left\{
\begin{array}{ll}
1=d'\vee e & \text{if }d\leq e, \\
d'\vee e   & \text{otherwise}.
\end{array}
\right.
\end{align*}
This shows
\begin{align*}
& 0\odot x=x\odot0=0, 1\odot x=x\odot1=x, \\
& 0\rightarrow x=x\rightarrow1=1, x\rightarrow0=x', 1\rightarrow x=x
\end{align*}
($x\in Q$) and
\begin{align*}
      x\odot y & =x\wedge y, \\
x\rightarrow y & =x'\vee y
\end{align*}
for all $x,y\in B$. \\
If $a\leq b'$ and $b\leq c'$ then $(a\odot b)\odot c=0\odot c=0=a\odot0=a\odot(b\odot c)$. \\
If $a\leq b'$ and $b\not\leq c'$ then $(a\odot b)\odot c=0\odot c=0=a\odot(b\wedge c)=a\odot(b\odot c)$. \\
If $a\not\leq b'$ and $b\leq c'$ then $(a\odot b)\odot c=(a\wedge b)\odot c=0=a\odot0=a\odot(b\odot c)$. \\
Now consider the case $a\not\leq b'$ and $b\not\leq c'$. Then
\begin{align*}
(a\odot b)\odot c & =(a\wedge b)\odot c, \\
 a\odot(b\odot c) & =a\odot(b\wedge c).
\end{align*}
If $a,b,c\in B$ then $(a\wedge b)\odot c=(a\wedge b)\wedge c=a\wedge(b\wedge c)=a\odot(b\wedge c)$. \\
If $a,b\in B$ and $c\notin B$ then $c>B$ and hence $(a\wedge b)\odot c=(a\wedge b)\wedge c=a\wedge b=a\odot b=a\odot(b\wedge c)$. \\
If $a\in B$, $b\notin B$ and $c\in B$ then $b>B$ and hence $(a\wedge b)\odot c=a\odot c=a\odot(b\wedge c)$. \\
If $a\in B$ and $b,c\notin B$ then $b>B$ and ($a\leq c'$ if and only if $a\leq b'\vee c'$). Hence $(a\wedge b)\odot c=a\odot c=0=a\odot(b\wedge c)$ if $a\leq c'$ and $(a\wedge b)\odot c=a\odot c=a\wedge c=(a\wedge b)\wedge c=a\wedge(b\wedge c)=a\odot(b\wedge c)$ otherwise. \\
If $a\notin B$ and $b,c\in B$ then $a>B$ and hence $(a\wedge b)\odot c=b\odot c=b\wedge c=(a\wedge b)\wedge c=a\wedge(b\wedge c)=a\odot(b\wedge c)$. \\
If $a\notin B$, $b\in B$ and $c\notin B$ then $a,c>B$ and hence $(a\wedge b)\odot c=b\odot c=b\wedge c=b=a\wedge b=a\odot b=a\odot(b\wedge c)$. \\
If $a,b\notin B$ and $c\in B$ then $b>B$ and ($a\leq c'$ if and only if $a\wedge b\leq c'$). Hence $(a\wedge b)\odot c=0=a\odot c=a\odot(b\wedge c)$ if $a\leq c'$ and $(a\wedge b)\odot c=(a\wedge b)\wedge c=a\wedge(b\wedge c)=a\wedge c=a\odot c=a\odot(b\wedge c)$ otherwise. \\
Finally, assume $a,b,c\notin B$. Without loss of generality, $(a,b,c)=(c_i,c_j,c_k)$. Now we have
\[
c_i\odot c_j=\left\{
\begin{array}{ll}
0             & \text{if }i+j\leq 2n+1, \\
c_{i\wedge j} & \text{otherwise}
\end{array}
\right.
\]
If $i+j,j+k\leq2n+1$ then $(c_i\odot c_j)\odot c_k=0\odot c_k=0=c_i\odot0=c_i\odot(c_j\odot c_k)$. \\
If $i+j\leq2n+1<j+k$ then $i+(j\wedge k)=(i+j)\wedge(i+k)\leq i+j\leq2n+1$ and hence $(c_i\odot c_j)\odot c_k=0\odot c_k=0=c_i\odot c_{j\wedge k}=c_i\odot(c_j\odot c_k)$. \\
If $j+k\leq2n+1<i+j$ then $(i\wedge j)+k=(i+k)\wedge(j+k)\leq j+k\leq2n+1$ and hence $(c_i\odot c_j)\odot c_k=c_{i\wedge j}\odot c_k=0=c_i\odot0=c_i\odot(c_j\odot c_k)$. \\
If $i+j,j+k>2n+1$ then
\begin{align*}
(i\wedge j)+k=(i+k)\wedge(j+k)\leq2n+1 & \text{ if and only if }i+k\leq2n+1, \\
i+(j\wedge k)=(i+j)\wedge(i+k)\leq2n+1 & \text{ if and only if }i+k\leq2n+1
\end{align*}
and hence $(c_i\odot c_j)\odot c_k=c_{i\wedge j}\odot c_k=c_i\odot c_{j\wedge k}=c_i\odot(c_j\odot c_k)$. \\
This shows that $\odot$ is associative. Since $a\leq b'$ is equivalent to $b\leq a'$, $\odot$ is commutative. Therefore, $(Q,\odot,1)$ is a commutative monoid. \\
If $a\leq b'$ and $b\leq c$ then $a\odot b=0\leq c$ and $a\leq1=b\rightarrow c$. \\
If $a\leq b'$ and $b\not\leq c$ then $a\odot b=0\leq c$, and $a\leq b\rightarrow c$ since $a\leq b'\vee c$. \\
If $a\not\leq b'$ and $b\leq c$ then $a\odot b\leq c$ since $a\wedge b\leq c$, and $a\leq1=b\rightarrow c$. \\
Now assume $a\not\leq b'$ and $b\not\leq c$. \\
If $a,b,c\in B$ then $a\wedge b\leq c$ if and only if $a\leq b'\vee c$ because of Lemma~\ref{lem2}. \\
If $a,b\in B$ and $c\notin B$ then $c<B$ and hence $a\wedge b\not\leq c$ and $a\not\leq b'=b'\vee c$. \\
If $b\notin B$ and $c\in B$ then $b>B$ and hence $a\wedge b\leq c$ and $a\leq b'\vee c$ are both equivalent to $a\leq c$. \\
If $a\in B$ and $b,c\notin B$ then $b>B$ and hence $a\wedge b\leq c$ and $a\leq b'\vee c$ are both equivalent to $a\leq c$. \\
If $a\notin B$ and $b,c\in B$ then $a>B$ and hence $a\wedge b=b\not\leq c$ and $a\not\leq b'\vee c$. \\
If $a\notin B$, $b\in B$ and $c\notin B$ then $c<B<a$ and hence $a\wedge b=b\not\leq c$ and $a\not\leq b'=b'\vee c$. \\
Finally, assume $a,b,c\notin B$. Without loss of generality, $(a,b,c)=(c_i,c_j,c_k)$. Now we have
\[
c_i\rightarrow c_j=\left\{
\begin{array}{ll}
1                  & \text{if }i\leq j, \\
c_{(2n+1-i)\vee j} & \text{otherwise}
\end{array}
\right.
\]
If $i+j\leq2n+1$ and $j\leq k$ then $c_i\odot c_j=0\leq c_k$ and $c_i\leq1=c_j\rightarrow c_k$. \\
If $i+j\leq2n+1$ and $j>k$ then $c_i\odot c_j=0\leq c_k$ and $c_i\leq c_{(2n+1-j)\vee k}$ since $i\leq2n+1-j$. \\
If $i+j>2n+1$ and $j\leq k$ then $c_i\odot c_j=c_{i\wedge j}\leq c_k$ and $c_i\leq c_{2n}=c_j\rightarrow c_k$. \\
Finally, assume $i+j>2n+1$ and $j>k$. Then the following are equivalent:
\begin{align*}
 c_i\odot c_j & \leq c_k, \\
c_{i\wedge j} & \leq c_k, \\
    i\wedge j & \leq k, \\
            i & \leq k.
\end{align*}
Moreover, the following are equivalent:
\begin{align*}
c_i & \leq c_j\rightarrow c_k, \\
c_i & \leq c_{(2n+1-j)\vee k}, \\
  i & \leq(2n+1-j)\vee k, \\
i+j & \leq(2n+1)\vee(k+j), \\
i+j & \leq k+j, \\
  i & \leq k.
\end{align*}
This proves adjointness.
\end{proof}

The poset $(Q,\leq)$ of Theorem~\ref{th2} is visualized in Figure~8:
\begin{center}
\setlength{\unitlength}{7mm}
\begin{picture}(2,8)
\put(1,0){\circle*{.3}}
\put(1,2){\circle*{.3}}
\put(1,3){\circle*{.3}}
\put(1,5){\circle*{.3}}
\put(1,6){\circle*{.3}}
\put(1,8){\circle*{.3}}
\put(1,4){\circle{2}}
\put(1,0){\line(0,1)3}
\put(1,5){\line(0,1)3}
\put(1.4,-.15){$c_1=0$}
\put(1.4,.75){$\vdots$}
\put(1.4,1.85){$c_n$}
\put(.75,3.8){$B$}
\put(1.4,5.85){$c_{n+1}$}
\put(1.4,6.75){$\vdots$}
\put(1.4,7.85){$c_{2n}=1$}
\put(.2,-.95){{\rm Fig.\ 8}}
\end{picture}
\end{center}
\vspace*{4mm}

\begin{example}
We consider the special case where $(B,\vee,\wedge,{}',p,q)$ is the eight-element Boolean algebra and $n=2$. The poset $(Q,\leq)$ is visualized in Figure~9:
\begin{center}
\setlength{\unitlength}{7mm}
\begin{picture}(6,14)
\put(3,0){\circle*{.3}}
\put(3,2){\circle*{.3}}
\put(3,4){\circle*{.3}}
\put(1,6){\circle*{.3}}
\put(3,6){\circle*{.3}}
\put(5,6){\circle*{.3}}
\put(1,8){\circle*{.3}}
\put(3,8){\circle*{.3}}
\put(5,8){\circle*{.3}}
\put(3,10){\circle*{.3}}
\put(3,12){\circle*{.3}}
\put(3,14){\circle*{.3}}
\put(3,4){\line(-1,1)2}
\put(3,0){\line(0,1)6}
\put(3,4){\line(1,1)2}
\put(3,6){\line(-1,1)2}
\put(3,6){\line(1,1)2}
\put(3,8){\line(-1,-1)2}
\put(3,8){\line(1,-1)2}
\put(3,10){\line(-1,-1)2}
\put(3,8){\line(0,1)6}
\put(3,10){\line(1,-1)2}
\put(1,6){\line(0,1)2}
\put(5,6){\line(0,1)2}
\put(3.4,-.15){$0$}
\put(3.4,1.85){$c_2$}
\put(3.4,3.85){$p$}
\put(.3,5.85){$a$}
\put(3.4,5.85){$b$}
\put(5.4,5.85){$c$}
\put(.3,7.85){$c'$}
\put(3.4,7.85){$b'$}
\put(5.4,7.85){$a'$}
\put(3.4,9.85){$q$}
\put(3.4,11.85){$c_3$}
\put(3.4,13.85){$1$}
\put(2.2,-1){{\rm Fig.\ 9}}
\end{picture}
\end{center}
\vspace*{4mm}
The corresponding residuated lattice has the operations
\[
\begin{array}{l|llllllllllll}
\odot & 0 & c_2 & p & a & b & c & a' & b' & c' & q  & c_3 & 1 \\
\hline
0     & 0 & 0   & 0 & 0 & 0 & 0 & 0  & 0  & 0  & 0  & 0   & 0 \\
c_2   & 0 & 0   & 0 & 0 & 0 & 0 & 0  & 0  & 0  & 0  & 0   & c_2 \\
p     & 0 & 0   & 0 & 0 & 0 & 0 & 0  & 0  & 0  & 0  & p   & p \\
a     & 0 & 0   & 0 & a & 0 & 0 & 0  & a  & a  & a  & a   & a \\
b     & 0 & 0   & 0 & 0 & b & 0 & b  & 0  & b  & b  & b   & b \\
c     & 0 & 0   & 0 & 0 & 0 & c & c  & c  & 0  & c  & c   & c \\
a'    & 0 & 0   & 0 & 0 & b & c & a' & c  & b  & a' & a'  & a' \\
b'    & 0 & 0   & 0 & a & 0 & c & c  & b' & a  & b' & b'  & b' \\
c'    & 0 & 0   & 0 & a & b & 0 & b  & a  & c' & c' & c'  & c' \\
q     & 0 & 0   & 0 & a & b & c & a' & b' & c' & q  & q   & q \\
c_3   & 0 & 0   & p & a & b & c & a' & b' & c' & q  & c_3 & c_3 \\
1     & 0 & c_2 & p & a & b & c & a' & b' & c' & q  & c_3 & 1
\end{array}
\]
\[
\begin{array}{l|llllllllllll}
\rightarrow & 0   & c_2 & p  & a  & b  & c  & a' & b' & c' & q & c_3 & 1 \\
\hline
0           & 1   & 1   & 1  & 1  & 1  & 1  & 1  & 1  & 1  & 1 & 1   & 1 \\
c_2         & c_3 & 1   & 1  & 1  & 1  & 1  & 1  & 1  & 1  & 1 & 1   & 1 \\
p           & q   & q   & 1  & 1  & 1  & 1  & 1  & 1  & 1  & 1 & 1   & 1 \\
a           & a'  & a'  & a' & 1  & a' & a' & a' & 1  & 1  & 1 & 1   & 1 \\
b           & b'  & b'  & b' & b' & 1  & b' & 1  & b' & 1  & 1 & 1   & 1 \\
c           & c'  & c'  & c' & c' & c' & 1  & 1  & 1  & c' & 1 & 1   & 1 \\
a'          & a   & a   & a  & a  & c' & b' & 1  & b' & c' & 1 & 1   & 1 \\
b'          & b   & b   & b  & c' & b  & a' & a' & 1  & c' & 1 & 1   & 1 \\
c'          & c   & c   & c  & b' & a' & c  & a' & b' & 1  & 1 & 1   & 1 \\
q           & p   & p   & p  & a  & b  & c  & a' & b' & c' & 1 & 1   & 1 \\
c_3         & c_2 & c_2 & p  & a  & b  & c  & a' & b' & c' & q & 1   & 1 \\
1           & 0   & c_2 & p  & a  & b  & c  & a' & b' & c' & q & c_3 & 1
\end{array}
\]
\end{example}

Authors' addresses:

Ivan Chajda \\
Palack\'y University Olomouc \\
Faculty of Science \\
Department of Algebra and Geometry \\
17.\ listopadu 12 \\
771 46 Olomouc \\
Czech Republic \\
ivan.chajda@upol.cz

Miroslav Kola\v r\'ik \\
Palack\'y University Olomouc \\
Faculty of Science \\
Department of Computer Science \\
17.\ listopadu 12 \\
771 46 Olomouc \\
Czech Republic \\
miroslav.kolarik@upol.cz

Helmut L\"anger \\
TU Wien \\
Faculty of Mathematics and Geoinformation \\
Institute of Discrete Mathematics and Geometry \\
Wiedner Hauptstra\ss e 8-10 \\
1040 Vienna \\
Austria, and \\
Palack\'y University Olomouc \\
Faculty of Science \\
Department of Algebra and Geometry \\
17.\ listopadu 12 \\
771 46 Olomouc \\
Czech Republic \\
helmut.laenger@tuwien.ac.at
\end{document}